\def\ps@pprintTitle{%
	\let\@oddhead\@empty
	\let\@evenhead\@empty
	\def\@oddfoot{\centerline{\thepage}}%
	\let\@evenfoot\@oddfoot}
\numberwithin{equation}{section}
\def\R{{\bf R}}
\def\e{{\varepsilon}}
\newtheorem{thm}{Theorem}[section]
\newtheorem{lem}{Lemma}[section]
\newtheorem{rem}{Remark}[section]
\begin{document}
	\begin{frontmatter}
		
		\title{Global Instability of the Multi-dimensional Plane Shocks for the isothermal flow}
		
		\author[mymainaddress,secondaryaddress]{Ning-An Lai}
		\ead{hyayue@gmail.com}
		
		\author[thirdlyaddress]{Wei, Xiang}
		\ead{weixiang@cityu.edu.hk}
		
		\author[secondaryaddress]{Yi Zhou}
		\ead{yizhou@fudan.edu.cn}
		
		\address[mymainaddress]{Institute of Nonlinear Analysis and Department of Mathematics,\\ Lishui University, Lishui 323000, China}
		\address[secondaryaddress]{School of Mathematical Sciences, Fudan University, Shanghai 200433, China}
		\address[thirdlyaddress]{Department of Mathematics, City University of Hong Kong, Kowloon, Hong Kong 999077, People¡¯s Republic of China }

		\begin{abstract}
In this paper, we are concerned with the long time behavior of the piecewise smooth solutions to the generalized Riemann problem governed by the compressible isothermal Euler equations in two and three dimensions. Non-existence result is established for the fan-shaped wave structure solution, including two shocks and one contact discontinuity and which is a perturbation of plane waves. Therefore, unlike the one-dimensional case, the multi-dimensional plane shocks are not stable globally. What is more, the sharp lifespan estimate is established which is the same as the lifespan estimate for the nonlinear wave equations in both two and three space dimensions. 
		\end{abstract}

		\begin{keyword}
			blow-up; global solution; instability; shock; contact disctinuity; Euler equations; isothermal; generalized Riemann problem; nonlinear wave equations.
			
			\MSC[2010] 35L60, 35L65, 76L05
			
		\end{keyword}
		
	\end{frontmatter}


\section{Introduction}
We are concerned with the non-existence of global solutions of the generalised Riemann problem governed by the compressible isothermal Euler equations. 
More precisely, we prove that the multi-dimensional $(N=2, 3)$ plane shocks are not stable in the global sense with respect to a smooth perturbation. 

It is well-known that smooth solutions of the compressible Euler equations with some compression assumption will generate singularity in finite time no matter how small the initial data is (cf. \cite{ChMiao,lax,LiZhouKong,Liu,Si}). Therefore, it is natural and important to study the Cauchy problem with discontinuous initial data. For the one-dimensional case, a satisfactory theory on the global existence and stability of the Cauchy problem is established by many mathematicians (cf. \cite{BiBr}-\cite{ChenChZhang},\cite{ChenXiangZhang,Da,FRX,Glimm,Lax1,Liu1,LiuYang}). However, the $B.V.$ space is not a wellposed space any more for the Cauchy problem in multidimensions. So almost all the efforts are focused on the multidimensional Riemann problem or the structural stability of important physical problems introduced in Courant-Friedrichs classic book \cite{CoFr} (cf. \cite{Ali}-\cite{BCF2},\cite{CXY}-\cite{ChenFeldmanXiang},\cite{ChenZhangZhu}-\cite{ChenYuan},\cite{ElLiu}-\cite{FX},\cite{HKWX,LiZheng,LWY2},\cite{Majda}-\cite{QX},\cite{WY}-\cite{Zhang}).

Multidimensional Riemann problem of the compressible Euler equations, which plays a prominent role in the theory of conservation laws, is one of the core and challenge problems in the mathemaical theory of conservation laws. One important problem is the generalized Riemann problem, which studies the Cauchy problem with discontinuous initial data along a smooth curve. If the data is assumed to be smooth up to the curve, then we expect the solution is of the fan-shape structure. The generalized Riemann problem can also be regarded as the stability of the Riemann solutions of the Cauchy problem with two constant states separeted by a hyperplane. There are a lot of literatures on the local existence of the generalized Riemann problem, for examples, Majda \cite{Majda,Majda1} for the strong shock, Metivier \cite{Me} for the weak shock, Alinhac \cite{Ali} for the rarefaction wave, Coulombel-Secchi \cite{CoSe1} for the two-dimensional vortex sheet, and \cite{chenli} for the two diemsnioal composited waves which can be shocks, rarefaction waves and vortex sheet.

So there is a natural question next:
\emph{how about the global existence of solutions of the generalised Riemann problem}?
As far as we know, there are few results on the global existence of those waves except the ones for the unsteady potential flow equation in $n$-dimensional spaces ($n\ge 5$, see \cite{God}) or in special space-time domains for the potential flow.
So it is of great significance to study the global behaviours of the solutions of the generalised Riemann problem from both the mathematical and physical views.
In this paper, we will give a negative answer, \emph{i.e.}, we will show the solutions of the generalised Riemann problem (if exist locally and is a perturbation of plane shocks) cannot exist globally for the two and three dimensional case, if the flow is isothermal. It means the plane Riemann solutions are not stable globally with respect to a smooth perturbation. Based on it, in order to obtain the global stability, we should think about the generalised Riemann problem in a weak sense. Moreover, the liftspan estimate, which is consistent with the liftspan estimate for the nonlinear wave equations, is also obtained. The result is different from the one dimensional case, in which the global existence is established (cf. \cite{LiZhao,LiWang}).

\section{Generalised Riemann Problem and Main Result}
%
%
%

The multidimensional inviscid compressible flow is governed by the following Euler equation:

\begin{equation}\label{EulerPoly}
\left \{
\begin{aligned}
&\rho_t+\rm{div}(\rho \mathbf{u})=0,\\
&(\rho u)_t+\rm{div}(\rho \mathbf{u}\otimes\mathbf{u})+\nabla p=0,
\end{aligned}\right.
\end{equation}
where $\rho$, $p$ and $\mathbf{u}$ are density, pressure and velocity repectively. For the isothermal flow, the pressure and density satisfy the thermodynamic relation that $p=\rho$. 
In this paper, we are concerned with the global stability/instability of solutions of the generalised Riemann problem governed by equations \eqref{EulerPoly} for the isothermal flow.
Since till now the local existence result for the vortex sheet is only available for the two-dimensional case, we will consider the two-dimensional case first. The global non-stability for the three-dimensional case to the isothermal flow will be proved in the end even though we do not know whether the local nonlinear existence can be obtained or not.

For the two dimensional case, equations \eqref{EulerPoly} become
\begin{equation}\label{problem}
\left \{
\begin{aligned}
&\rho_t+(\rho u)_x+(\rho v)_y=0,\\
&(\rho u)_t+(\rho u^2)_x+(\rho uv)_y+\rho_x=0,\\
&(\rho v)_t+(\rho uv)_x+(\rho v^2)_y+\rho_y=0,
\end{aligned}\right.
\end{equation}
where $(u,v)$ is the velocity
with the initial data
\begin{equation}\label{data}
t=0:\left \{
\begin{aligned}
&\rho=\left \{
\begin{aligned}
&\rho_r+\varepsilon\rho_0(x,y),\qquad x>\varepsilon\Pi(y),\\
&\rho_l+\varepsilon\rho_0(x,y),\qquad x<\varepsilon\Pi(y),\\
\end{aligned} \right.\\
&u=\left \{
\begin{aligned}
&u_r+\varepsilon u_0(x,y),\qquad x>\varepsilon\Pi(y),\\
&u_l+\varepsilon u_0(x,y),\qquad x<\varepsilon\Pi(y),\\
\end{aligned} \right.\\
&v=\varepsilon v_0(x,y),\\
\end{aligned} \right.
\end{equation}
where $\rho_r, \rho_l, u_r, u_l$ are constants, and functions $\Pi(y)\in C_0^{\infty}(\R)$, $\rho_0, u_0, v_0 \in C_0^{\infty}(\R^2)$ satisfy
\begin{equation}\label{2.4x}
supp~\Pi(y)\subset \{y\big||y|\le 1\}
\end{equation}
and
\begin{equation}\label{2.5x}
 supp~ \rho_0,u_0,v_0\subset B_1=\{(x, y): x^2+y^2\leq1\}.
\end{equation}
Moreover, $\varepsilon>0$ is a small parameter.

For a piecewise $C^1$ weak solution of \eqref{problem} with $C^1$-discontinuities, by the integration by part, it is easy to know that the solution is a solution of \eqref{problem} in the classic sense in each smooth subregion, and across the discontinuities ($\Pi$ for example), satisfies the following Rankine-Hugoniot conditions:
\begin{align}
\partial_t\Pi[\rho]\Big|_{\Pi^-}^{\Pi^+}-[\rho u]\Big|_{\Pi^-}^{\Pi^+}+\partial_y\Pi[\rho v]\Big|_{\Pi^-}^{\Pi^+}&=0\label{RH1}\\
\partial_t\Pi[\rho u]\Big|_{\Pi^-}^{\Pi^+}-[\rho u^2+\rho]\Big|_{\Pi^-}^{\Pi^+}+\partial_y\Pi[\rho uv]\Big|_{\Pi^-}^{\Pi^+}&=0\\
\partial_t\Pi[\rho v]\Big|_{\Pi^-}^{\Pi^+}-[\rho uv]\Big|_{\Pi^-}^{\Pi^+}+\partial_y\Pi[\rho v^2+\rho]\Big|_{\Pi^-}^{\Pi^+}&=0\label{RH3}
\end{align}
where $[\cdot]\Big|_{\Pi^-}^{\Pi^+}$ denotes the difference of the left hand side limit and the right hand side limit of the quantity concerned on the discontinuity $x=\Pi(t,y)$.

For the corresponding one-dimensional Riemann problem, which is governed by the following equaiton
 \begin{equation}\label{1D}
\left \{
\begin{aligned}
&\rho_t+(\rho u)_x=0,\\
&(\rho u)_t+(\rho u^2)_x+\rho_x=0,\\
\end{aligned} \right.
\end{equation}
with the initial data that
\begin{equation}\label{1Ddata}
t=0:\left \{
\begin{aligned}
&\rho=\left \{
\begin{aligned}
&\rho_r,x>0,\\
&\rho_l,x<0,\\
\end{aligned} \right.\\
&u=\left \{
\begin{aligned}
&u_r,x>0,\\
&u_l,x<0,\\
\end{aligned} \right.
\end{aligned} \right.
\end{equation}
it is well-known that if constant vector $(\rho_r,u_r)$ lies in a cornered domain with boundaries being the wave curves starting from $(\rho_l,u_l)$, Riemann problem \eqref{1D} and \eqref{1Ddata}
admits a Riemann solution who consists of three constant states $(\rho_l,u_l)$, $(\rho_m,u_m)$, and $(\rho_r,u_r)$ separated by two shocks with shock speeds $\sigma_{+}$ and $\sigma_{-}$, respectively. Without loss of the generality, we assume $u_m=0$,
otherwise we can introduce the coordinate transformation that $x\rightarrow x-u_mt$.
%
In this case, across the shock, the following Rankine-Hugoniot conditions hold
\begin{align}
\sigma_+(\rho_r-\rho_m)-\rho_ru_r=0,\qquad & \sigma_{+}\rho_ru_r-\rho_ru_r^2-\rho_r+\rho_m=0\label{2.8}\\
\sigma_-(\rho_l-\rho_m)-\rho_lu_l=0,\qquad &\sigma_{-}\rho_lu_l-\rho_lu_l^2-\rho_l
+\rho_m=0.\label{2.9}
\end{align}
Moreover, the Riemann solution satisfies the following entropy condition:
\begin{equation}\label{entropy}
\rho_m>\rho_l,\qquad \rho_m>\rho_r,\qquad u_l>0,\qquad u_r<0.
\end{equation}
In summary, the Riemann solution of equation \eqref{1D} with initial data \eqref{1Ddata} is
\begin{equation}\label{2.11}
\rho=\begin{cases}
\rho_r,\quad&\mbox{if }x>\sigma_{+}t\\
\rho_m,\quad&\mbox{if }\sigma_{-}t<x<\sigma_{+}t\\
\rho_l,\quad&\mbox{if }x<\sigma_t
\end{cases},
\qquad
u=\begin{cases}
u_r,\quad&\mbox{if }x>\sigma_{+}t\\
0,\quad&\mbox{if }\sigma_{-}t<x<\sigma_{+}t\\
u_l,\quad&\mbox{if }x<\sigma_t
\end{cases}.
\end{equation}

The generalized Riemann problem \eqref{problem} with initial data \eqref{data} can be regarded as a small perturbation of the Riemann solution \eqref{2.11} when $\varepsilon$ is sufficiently small.

Under a condition for the speed of the initial data (similar condition as the one in \cite{CoSe1} for the vortex sheet), Chen and Li \cite{chenli} established the local existence of piecewise smooth solution  of equations \eqref{problem} with initial data \eqref{data} containing all three waves (\emph{i.e.}, shock wave, rarefaction wave and contact discontinuity). Their existence result also includes the case that the generalised Riemann solution consists of the 1-shock wave $x=\Pi_-(t,y)$ corresponding to the first eigenvalue, the 3-shock wave $x=\Pi_+(t,y)$ correponding to the third eigenvalue, and the contact discontinuity $x=\Pi_0(t,y)$, with the condition that
\begin{equation}
\varepsilon\Pi(y)=\Pi_-(0,y)=\Pi_0(0,y)=\Pi_+(0,y),
\end{equation}
and that
\begin{equation}\label{novacumm}
0<\rho_\ast<\rho<\rho^\ast<\infty,
\end{equation}
where $\rho_\ast$ and $\rho^\ast$ are two constants.

The aim of this paper is to prove that, for the isothermal case, such piecewise smooth solution  of equations \eqref{problem} with initial data \eqref{data} obtained in \cite{chenli} can not be global in time in general. This result is different from the one for the one dimensional case, in which the global existence is established (cf. \cite{LiZhao,LiWang}). We also obtain the lifespan estimate.

\begin{thm}\label{result} For the given initial data \eqref{data}, assume that
	\begin{equation}\label{2.17}
	\int_{-\infty}^{+\infty}\int_{-\infty}^{+\infty}(e^y+e^{-y})\rho_0dxdy+(\rho_l-\rho_r)\int_{-\infty}^{+\infty}(e^y+e^{-y})\Pi dy\geq0,
	\end{equation}
	and there exists a constant $C>0$ such that
	\begin{equation}\label{2.18}
\begin{aligned}
		&\int_{-\infty}^{+\infty}\int_{\varepsilon\Pi(y)}^{+\infty}(e^y-e^{-y})(\rho_r+\varepsilon\rho_0) v_0dxdy\\
&+	\int_{-\infty}^{+\infty}\int^{\varepsilon\Pi(y)}_{-\infty}(e^y-e^{-y})(\rho_l+\varepsilon\rho_0) v_0dxdy\\
\geq& C.
\end{aligned}
	\end{equation}
If there exist positive constants $R$ and $t_0$ such that solution $(\rho,u,v)$ of equations \eqref{problem} satisfies
\[
\left|(\rho, u, v)-(\rho, u, v)_{l, m, r}\right|_{L^{\infty}}\le C\e, \qquad\mbox{when }x^2+y^2\geq R\mbox{ and }t\geq t_0,
\]
where the constant $C$ in the above two inequalities is positive and does not depend on $\varepsilon$,
then the piecewise smooth solution, whose discontinuities consist of two shock waves $x=\Pi_{\pm}(t,y)$ and a contact discontinuity $x=\Pi_0(t,y)$, for the generalized Riemann problem \eqref{problem} of isothermal compressible Euler equations with initial data \eqref{data} will blow up
in a finite time. Moreover, there exists a positive constant $C$ independent of $\e$ such that the upper bound of the lifespan satisfies the estimate
\begin{equation}\label{lifespan}
T(\e)\leq C\e^{-2}.\\
\end{equation}
\end{thm}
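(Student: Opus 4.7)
The plan is to build two functionals adapted to the weights $e^{y}\pm e^{-y}$ appearing in \eqref{2.17}--\eqref{2.18}, derive a coupled ODE system from the conservation laws and Rankine--Hugoniot conditions, and then combine with the far-field smallness hypothesis to force a contradiction. Writing $(\bar\rho,\bar u,0)$ for the one-dimensional Riemann solution \eqref{2.11} extended trivially in $y$, set
\[
F_1(t)=\iint_{\R^{2}}(e^{y}+e^{-y})(\rho-\bar\rho)\,dxdy,\qquad F_2(t)=\iint_{\R^{2}}(e^{y}-e^{-y})\rho v\,dxdy.
\]
By the compact initial support \eqref{2.4x}--\eqref{2.5x} and finite propagation speed, the integrands are compactly supported in $(x,y)$ for each $t$, so both integrals are finite. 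A direct computation at $t=0$ identifies $F_1(0)$ and $F_2(0)$ (up to a factor $\varepsilon$) with the left-hand sides of \eqref{2.17} and \eqref{2.18}, so the hypotheses read $F_1(0)\geq 0$ and $F_2(0)\geq C\varepsilon$.

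Next I derive an ODE system. Using the weak formulation of \eqref{problem} --- so that the jumps \eqref{RH1}--\eqref{RH3} along the three discontinuities $\Pi_\pm(t,y),\Pi_0(t,y)$ are automatically absorbed --- the continuity equation, the observation that $\rho u-\bar\rho\bar u$ has compact $x$-support, and the identity $(e^{y}+e^{-y})_y=e^{y}-e^{-y}$ combine to give
\[
F_1'(t)=F_2(t).
\]
Analogously, the third equation of \eqref{problem}, combined with $\bar\rho_y=0$ and $(e^{y}-e^{-y})_y=e^{y}+e^{-y}$, yields
\[
F_2'(t)=F_1(t)+Q(t),\qquad Q(t):=\iint(e^{y}+e^{-y})\rho v^{2}\,dxdy\geq 0.
\]
The background jumps at $x=\sigma_\pm t$ drop out from the $y$-independence of $(\bar\rho,\bar u)$, and the nonlinear quadratic contribution $Q\geq 0$ appears precisely because $e^{y}+e^{-y}$ is the $y$-derivative of $e^{y}-e^{-y}$.

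Setting $H(t):=F_1(t)+F_2(t)$ gives $H'(t)=H(t)+Q(t)\geq H(t)$, hence $H(t)\geq C\varepsilon e^{t}$, and the inductive bound $F_1(t),F_2(t)\geq 0$ follows from $F_1'=F_2$, $F_2'\geq F_1$, and the initial positivity. The quadratic term $Q$ then feeds back through Cauchy--Schwarz: using the vacuum-free bound $\rho\geq\rho_\ast>0$ from \eqref{novacumm} together with the finite-propagation-speed estimate on $\mathrm{supp}\,v$,
\[
F_2(t)^{2}\leq\biggl(\iint_{\mathrm{supp}\,v}\frac{(e^{y}-e^{-y})^{2}}{(e^{y}+e^{-y})\rho}\,dxdy\biggr)Q(t)\leq C(1+t)^{2}e^{t}\,Q(t).
\]
Comparing the resulting Riccati-type growth of $F_2$ against the a priori upper bound $|F_2(t)|\leq C\varepsilon(1+t)^{2}e^{t}$ coming from the far-field smallness $|(\rho,u,v)-(\rho,u,v)_{l,m,r}|_\infty\leq C\varepsilon$ is expected to yield a contradiction precisely at the scale $T\sim\varepsilon^{-2}$, matching the announced lifespan \eqref{lifespan}.

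The main obstacle will be the rigorous treatment of the three perturbed discontinuities $\Pi_\pm,\Pi_0$ when deriving the clean identities $F_1'=F_2$ and $F_2'=F_1+Q$: one must verify that the weak formulation absorbs every jump contribution and that no surface term comparable to $Q$ is left behind. A second delicate point is the sharp balance in the Cauchy--Schwarz step between the exponential weight $e^{|y|}$ and the linear-in-$t$ growth of the support, which is what ultimately produces the critical exponent $\varepsilon^{-2}$. The three-dimensional result announced at the end of the paper is planned to follow by applying the same argument to a single transverse coordinate $y_i$, since the identities $(e^{y_i}\pm e^{-y_i})_{y_i}=e^{y_i}\mp e^{-y_i}$ and the isothermal sound speed $c=1$ are dimension-independent.
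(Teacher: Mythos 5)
Your overall strategy coincides with the paper's. Your $F_1$ is exactly the paper's functional $X(t)$: expanding $\iint(e^y+e^{-y})(\rho-\bar\rho)\,dxdy$ over the four perturbed regions reproduces the four interior integrals plus the two front-displacement terms $(\rho_m-\rho_r)\int(e^y+e^{-y})[\Pi_+-\sigma_+t]\,dy$ and $(\rho_m-\rho_l)\int(e^y+e^{-y})[\sigma_-t-\Pi_-]\,dy$ in \eqref{3.1}. Your $F_2$ is the paper's $Y(t)$, and the system $F_1'=F_2$, $F_2'=F_1+Q$ is the paper's Lemma \ref{lem:3.1}, which is proved there by integrating by parts region by region and cancelling the interface terms via \eqref{RH1}--\eqref{RH3} together with \eqref{2.8}--\eqref{2.9}; your distributional packaging of that computation is legitimate. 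The Riccati mechanism at the end is also the paper's.

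There is, however, a genuine gap at the one quantitative step on which everything hinges. The correct bound for the weighted measure of the support is
\[
\int_{x^2+y^2\le(t+C_1)^2}(e^y+e^{-y})\rho\,dxdy\le C(1+t)^{1/2}e^{t},
\]
not $C(1+t)^{2}e^{t}$. Indeed $|x|\le\sqrt{t+C_1-|y|}\,\sqrt{t+C_1+|y|}$, and since the weight $e^{|y|}$ concentrates all the mass near $|y|=t+C_1$, the substitution $\tau=t+C_1-|y|$ yields $C(1+t)^{1/2}e^{t}\int_0^{\infty}e^{-\tau}\sqrt{\tau}\,d\tau$; your crude bound (area of the ball times the supremum of the weight) loses a factor $(1+t)^{3/2}$, and this loss is fatal. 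With $(1+t)^{2}e^{t}$ the final differential inequality becomes $W'\ge CW^{2}(1+t)^{-2}$, whose coefficient is integrable in time, so with $W(t_0)\sim\varepsilon$ the Riccati comparison gives no blow-up at all for small $\varepsilon$ and hence no contradiction and no lifespan bound. The nonintegrable decay $(1+t)^{-1/2}$ is precisely what produces $T(\varepsilon)\le C\varepsilon^{-2}$. Two smaller points: in the Cauchy--Schwarz split the factor $\rho$ must sit in the numerator of the first integrand (so it is the upper bound $\rho\le\rho^{*}$ from \eqref{novacumm} that is used, not the lower bound), and the conclusion is not drawn by comparing against an a priori upper bound on $F_2$; rather, setting $e^{t}Z=\int_0^tY$, one shows $(e^{2t}Z')'\ge0$ from \eqref{2.17}--\eqref{2.18}, hence $Z'\ge0$, and then $W=Z'+2Z$ satisfies $W'\ge CW^{2}(1+t)^{-1/2}$ with $W(t_0)\ge C\varepsilon$, which forces $W$ to become infinite before $C\varepsilon^{-2}$, contradicting the assumed existence. (Also, the three-dimensional theorem is not handled with a single transverse coordinate but with the spherical average $F(y)=\int_{|\omega|=1}e^{y\cdot\omega}d\sigma$, which satisfies $\Delta_yF=F$ and $0\le F\le Cr^{-1/2}e^{r}$.)
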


\begin{rem}
The lifespan estimate \eqref{lifespan} is consistent with the lifespan estimate of smooth solutions of nonlinear wave equations in two space dimensions.
\end{rem}

Next, let us consider the three dimensional case.
We denote the coordinates as $(x, y_1, y_2)$,
so the three dimensional compressible isothermal Euler system is
\begin{equation}\label{3dproblem}
\left \{
\begin{aligned}
&(\rho)_t+(\rho u)_x+(\rho v_1)_{y_1}+(\rho v_2)_{y_2}=0,\\
&(\rho u)_t+\left(\rho u^2\right)_x+(\rho uv_1)_{y_1}+(\rho uv_2)_{y_2}+\rho_x=0,\\
&(\rho v_1)_t+\left(\rho uv_1\right)_x+\left(\rho v_1^2\right)_{y_1}+(\rho v_1v_2)_{y_2}+\rho_{y_1}=0,\\
&(\rho v_2)_t+\left(\rho uv_2\right)_x+\left(\rho v_1v_2\right)_{y_1}+\left(\rho v_2^2\right)_{y_2}+\rho_{y_2}=0,
\end{aligned}\right.
\end{equation}
where $(u,v_1,v_2)$ are the velocity. In order to make the notations be consistent with the ones for the two dimensional case, let $v:=(v_1,v_2)$.

\begin{thm}\label{thm3D}
Assume the initial datum satisfy that
\begin{equation}\label{2.21}
\begin{aligned}
&\int_{|\omega|=1}
\int_{\R^2}
e^{y_1\omega_1+y_2\omega_2}
\int_{-\infty}^{+\infty}\rho_0(x,y)dxdyd\sigma\\
&+(\rho_l-\rho_r)\int_{|\omega|=1}
\int_{\R^2}e^{y_1\omega_1+y_2\omega_2}\Pi(y)dyd\sigma\\
\geq&0
\end{aligned}
\end{equation}
and there exists a constant $C>0$ such that
\begin{equation}\label{2.22}
\begin{aligned}
&\int_{|\omega|=1}\int_{\R^2}\int_{\varepsilon\Pi(y)}^{+\infty}e^{y_1\omega_1+y_2\omega_2}(\rho_r+\varepsilon\rho_0) v_0(y)\cdot\omega dxdy\\
&+	\int_{|\omega|=1}\int_{\R^2}\int^{\varepsilon\Pi(y)}_{-\infty}e^{y_1\omega_1+y_2\omega_2}(\rho_l+\varepsilon\rho_0) v_0(y)\cdot\omega dxdy\\
\geq& C.
\end{aligned}
\end{equation}
Then it is impossible that there exists a global piecewise smooth solution of the generalized Riemann problem for the compressible isothermal Euler system \eqref{3dproblem} with initial data \eqref{data} and consisting of two shocks and one contact discontinuity such that
\[
\left|(\rho, u, v)-(\rho, u, v)_{l, m, r}\right|_{L^{\infty}}\le C\e,
\]
where the constant $C$ in the above two inequalities is positive and does not depend on $\varepsilon$.
Furthermore, we have the following lifespan estimate
\begin{equation}\label{3dlifespan}
T(\e)\le \exp\left(C\e^{-1}\right).\\
\end{equation}
\end{thm}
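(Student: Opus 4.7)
The plan is to mirror the two-dimensional proof: build test-function identities from the isothermal Euler system plus the Rankine–Hugoniot jumps, and reduce the question to an ODE blow-up argument whose critical lifespan is governed by the decay of spherical averages in three dimensions. The key observation is that $e^{y\cdot\omega}$ (with $\omega\in S^1$) satisfies $\Delta_y e^{y\cdot\omega}=e^{y\cdot\omega}$, so the weight
\[
\Phi(y)=\int_{|\omega|=1}e^{y_1\omega_1+y_2\omega_2}\,d\sigma
\]
is the three-dimensional analogue of $e^y+e^{-y}$ from the 2D proof, and likewise $\int_{|\omega|=1}e^{y\cdot\omega}\omega\,d\sigma$ plays the role of $e^y-e^{-y}$. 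Because these test functions grow at infinity, the uniform $C\varepsilon$ bound on $(\rho,u,v)-(\rho,u,v)_{l,m,r}$ outside a compact set is precisely what is needed to make the test-function integrals finite and to discard boundary terms at spatial infinity.

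First I would multiply the continuity equation by $\Phi(y)$ and the vector momentum equation by $\int_{|\omega|=1}e^{y\cdot\omega}\omega\,d\sigma$, integrate over $\mathbb{R}^3$, and use the Rankine–Hugoniot conditions on the three discontinuity surfaces $x=\Pi_{\pm}(t,y)$, $x=\Pi_0(t,y)$ to convert the interior integration by parts into a clean evolution identity. Defining
\[
F(t)=\int_{|\omega|=1}\int_{\mathbb{R}^2}e^{y\cdot\omega}\!\int_{-\infty}^{+\infty}\!\bigl(\rho-\rho_{l,m,r}\bigr)\,dx\,dy\,d\sigma,
\]
\[
G(t)=\int_{|\omega|=1}\int_{\mathbb{R}^2}e^{y\cdot\omega}\!\int_{-\infty}^{+\infty}\!\rho\,v\cdot\omega\,dx\,dy\,d\sigma,
\]
one should get, modulo shock contributions that produce jump factors $[\rho]$ at $\Pi_\pm$, the pair of identities $F'(t)=G(t)$ together with $G'(t)=F(t)+\text{(lower order)}$ plus a positive nonlinear correction coming from the $\rho v\otimes v$ flux. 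The initial hypotheses \eqref{2.21} and \eqref{2.22} are exactly $F(0)\ge 0$ and $G(0)\ge C>0$.

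The coupled linear part gives the Bessel-type bound $F(t)+G(t)\gtrsim \varepsilon\,e^{t}/(1+t)$ after using the standard stationary-phase estimate $\Phi(y)\sim 2\pi e^{|y|}/\sqrt{|y|}$ in 2D, which provides the logarithmic loss responsible for the exponential—rather than polynomial—lifespan. Plugging the isothermal constraint $p=\rho$ and the quadratic structure of the momentum flux into the identity for $G'(t)$, a Cauchy–Schwarz estimate (using the uniform bound on $(\rho,u,v)$ in the exterior region and the compact support of the perturbation in the interior) yields a Ricatti-type differential inequality
\[
G'(t)\ge G(t)+\frac{c\varepsilon}{(1+t)}\,G(t)^{2}.
\]
Integrating this inequality forces $G(t)$ to blow up at time $T(\varepsilon)\le \exp(C\varepsilon^{-1})$, giving \eqref{3dlifespan}.

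The main obstacle, as in 2D, is controlling the nonlinear flux contributions in $G'(t)$ so that the sign is correct and no cancellation eats the exponentially growing part; this is where the hypothesis \eqref{2.22} (strict positivity, not just non-negativity) is used, together with the fact that $e^{y\cdot\omega}\omega$ has a distinguished sign when paired against $v\cdot\omega$. Secondary technical points are the justification that shock and contact-discontinuity boundary terms combine to produce no bad-sign contribution—this uses the entropy condition and the Rankine–Hugoniot equalities \eqref{RH1}--\eqref{RH3} generalized to three space variables—and the asymptotic evaluation of $\int_{|\omega|=1}e^{y\cdot\omega}d\sigma$, which must be done carefully because the geometric decay differs from the 2D case of $\cosh y$ and is precisely what converts a polynomial-lifespan argument into the critical-lifespan bound $\exp(C\varepsilon^{-1})$.
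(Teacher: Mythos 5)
Your overall strategy coincides with the paper's: the same spherical test function $F(y)=\int_{|\omega|=1}e^{y\cdot\omega}\,d\sigma$ with $\Delta_yF=F$ and $F(y)\le Cr^{-1/2}e^{r}$, the same two functionals ($X(t)$ weighted by $F$ against the density deviations plus the shock-front corrections, and $Y(t)$ weighted by $e^{y\cdot\omega}\omega$ against $\rho v$), the Rankine--Hugoniot conditions to absorb the interface terms, and a Riccati-type blow-up at the end. However, the quantitative core of your argument is wrong as written, in two places. First, hypothesis \eqref{2.22} is stated for the profile $v_0$, while the velocity at $t=0$ is $\varepsilon v_0$; it therefore gives $Y(0)\ge C\varepsilon$, not $G(0)\ge C$ with $C$ independent of $\varepsilon$. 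Second, and more seriously, the differential inequality you propose, $G'\ge G+c\varepsilon(1+t)^{-1}G^{2}$, is not what the computation yields and does not imply the lifespan $\exp(C\varepsilon^{-1})$. What the identities plus Cauchy--Schwarz actually give is
\begin{equation*}
Y'(t)\ \ge\ X(0)+\int_0^tY(\tau)\,d\tau\ +\ C\,Y^{2}(t)\,(1+t)^{-1}e^{-t},
\end{equation*}
because the linear term is $X(t)=X(0)+\int_0^tY(\tau)\,d\tau$ (not $Y$ itself), and the normalizing factor $\int F\rho\,dx\,dy$ over the light cone is bounded by $C(1+t)e^{t}$, so the quadratic coefficient carries an $e^{-t}$ but no $\varepsilon$. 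If one integrates the inequality you wrote, blow-up occurs by a time of order $\ln(1/\varepsilon)$ (or even $O(1)$ if $G(0)\ge C$), not $\exp(C\varepsilon^{-1})$; so the stated conclusion does not follow from your own estimate.

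The missing step is the renormalization the paper performs before the Riccati argument: set $e^{t}Z(t)=\int_0^tY(\tau)\,d\tau$ and $W=Z'+2Z$. The sign conditions $X(0)\ge0$ and $Y(0)\ge0$ give $(e^{2t}Z')'\ge0$, hence $Z'\ge0$ and $Z+Z'\ge Z+\tfrac12 Z'$, and the exponential weight cancels exactly so that the inequality closes as $W'\ge CW^{2}(1+t)^{-1}$ for $t\ge\widetilde t_0$, with the smallness sitting entirely in the initial value $W(\widetilde t_0)\ge CY(0)\ge C\varepsilon$. Integrating $W'/W^{2}\ge C(1+t)^{-1}$ then yields blow-up once $C\ln(1+t)$ reaches $1/W(\widetilde t_0)\sim\varepsilon^{-1}$, i.e.\ $T(\varepsilon)\le\exp(C\varepsilon^{-1})$. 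Without this change of unknown (or an equivalent device that strips off the $e^{t}$ growth and places the $\varepsilon$ in the initial datum rather than in the coefficient of the quadratic term), the proposal does not reach \eqref{3dlifespan}.
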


\begin{rem}
	Although there is no result on the local existence of solutions of the generalised Riemann problem due to the nonlinear vortex sheet in three dimensions, we can show the three dimensional solutions of the generalised Riemann problem of isothermal compressible Euler equations cannot exist globally even if one could show the local existence. 
\end{rem}

\begin{rem}
The lifespan estimate \eqref{3dlifespan} is consistent with the lifespan estimate of smooth solutions of nonlinear wave equations in three space dimensions.
\end{rem}

We will show Theorem \ref{result} in Section \ref{sec:3}, and Theorem \ref{thm3D} in Section \ref{sec:5}.


\section{Proof of Theorem \ref{result}: Two dimensional Case}\label{sec:3}
To show Theorem \ref{result}, we will rewrite the first and
third equations in \eqref{problem} in four subdomains separated by the shocks and contact discontinuity, by substracting the background solution. Next, we introduce the multiplier $e^y+e^{-y}$ for the first equation and $e^y-e^{-y}$ for the third equation. Then we can derive an ordinary differential system for two quantities, which are integrals of the solutions with respect to the space variables, by using the Rankine-Hugoniot conditions \eqref{RH1}--\eqref{RH3}. By the delicate analysis of the obtained ordinary differential system, we obtain a blow-up result for the new quantity. Finally, the desired lifespan estimate will be established too.

First, let us introduce a technical lemma. Let
\begin{equation}\label{3.1}
\begin{aligned}
X(t)=&\int_{-\infty}^{+\infty}(e^y+e^{-y})\Bigg[\int_{\Pi_+(t, y)}^{+\infty}(\rho-\rho_{r})dx+\int_{\Pi_0(t, y)}^{\Pi_+(t, y)}(\rho-\rho_{m})dx\\
&+\int_{\Pi_-(t, y)}^{\Pi_0(t, y)}(\rho-\rho_{m})dx+\int_{-\infty}^{\Pi_-(t, y)}(\rho-\rho_{l})dx\Bigg]dy\\
&+(\rho_m-\rho_r)\int_{-\infty}^{+\infty}(e^y+e^{-y})\left[\Pi_+(t, y)-\sigma_+t\right]dy\\
&+(\rho_m-\rho_l)\int_{-\infty}^{+\infty}(e^y+e^{-y})\left[\sigma_-t-\Pi_-(t, y)\right]dy£¬\\
\end{aligned}
\end{equation}
and let
\begin{equation}\label{3.2}
Y(t)=\int_{\R^2}(e^y-e^{-y})\rho vdxdy,
\end{equation}
then we have
\begin{lem}\label{lem:3.1} For the solutions of equations \eqref{problem}, we have the following identities:
\begin{equation}
\label{X}
\begin{aligned}
X'(t)=Y(t)
\end{aligned}
\end{equation}
and
\begin{equation}
\label{Y}
\begin{aligned}
Y'(t)=X(t)+\int_{\R^2}(e^y+e^{-y})\rho v^2dxdy
\end{aligned}
\end{equation}
\end{lem}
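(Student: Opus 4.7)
The plan is to derive both identities by differentiating under the integral sign, substituting the PDEs from \eqref{problem}, and using the Rankine-Hugoniot conditions \eqref{RH1}--\eqref{RH3} together with the 1D jump relations \eqref{2.8}--\eqref{2.9} to cancel boundary contributions. The crucial algebraic fact driving everything is that the weights satisfy $(e^y+e^{-y})' = e^y-e^{-y}$ and $(e^y-e^{-y})' = e^y+e^{-y}$, so that integrating the $y$-derivative of a smooth quantity against $e^y+e^{-y}$ produces the other weight, and conversely.

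To prove $X'(t)=Y(t)$, I would apply Leibniz's rule to each of the four strip integrals in \eqref{3.1} and to the two correction integrals. On each strip I would substitute the continuity equation $\rho_t = -(\rho u)_x-(\rho v)_y$ and integrate the $x$-derivative term explicitly, producing trace values of $\rho u$ at the shock positions and at infinity. The boundary-at-infinity constants $\rho_l u_l$ and $\rho_r u_r$ will combine with the $\sigma_\pm(\rho_m-\rho_{l,r})$ pieces coming from differentiating the correction terms; by the 1D Rankine-Hugoniot relations \eqref{2.8}--\eqref{2.9} these sums vanish. At each discontinuity $\Pi_\pm,\Pi_0$ the sum of the Leibniz boundary contributions and the $x$-integrated $(\rho u)_x$ traces amounts to $-\partial_t\Pi_i[\rho]+[\rho u]$, and by \eqref{RH1} this equals $-\partial_y\Pi_i[\rho v]$; moreover, the Leibniz contributions from the background jumps $(\rho_r-\rho_m),(\rho_m-\rho_l)$ cancel exactly against the $\partial_t\Pi_\pm$ terms coming from the correction integrals. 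The only surviving piece is the interior term $-\int w_+(y)\partial_y\bigl(\int_{-\infty}^{+\infty}\rho v\,dx\bigr)\,dy$, and one integration by parts in $y$ (using $w_+'=w_-$ and decay at $|y|\to\infty$) yields exactly $Y(t)$; one must also check that the boundary $x$-traces arising from $\int(\rho v)_y\,dx$ on each strip cancel precisely the $-\partial_y\Pi_i[\rho v]$ terms, which they do since both come from the same shock traces with opposite signs.

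For $Y'(t)$ I would differentiate $Y$ region by region using Leibniz in $t$, substitute the third equation of \eqref{problem} in the form $(\rho v)_t = -(\rho uv)_x-(\rho v^2)_y-\rho_y$, and again integrate the $x$-derivative piece explicitly and split the $y$-derivative piece into an interior $\partial_y$ of an integral plus shock traces. The jump terms this time combine as $-\partial_t\Pi_i[\rho v]+[\rho uv]$, which by \eqref{RH3} equals $\partial_y\Pi_i[\rho v^2+\rho]$; these cancel exactly against the shock-trace contributions coming from the $y$-integrated $(\rho v^2)_y$ and $\rho_y$ pieces. Integration by parts in $y$ on the interior pieces using $w_-'=w_+$ converts $-\int w_-\partial_y\int \rho v^2\,dx\,dy$ into $\int\!\!\int(e^y+e^{-y})\rho v^2\,dxdy$ and converts the analogous $\rho$-integral into precisely the sum $\int w_+ A\,dy$ where $A$ is the bracketed expression in \eqref{3.1}; the leftover $\partial_y\Pi_\pm(\rho_m-\rho_{l,r})$ contributions, after one further integration by parts in $y$, reproduce the two correction terms $C_1,C_2$ of $X(t)$. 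Assembling these yields $Y'(t)=X(t)+\int\!\!\int(e^y+e^{-y})\rho v^2\,dxdy$.

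The main obstacle is purely bookkeeping: there are four moving strips, three discontinuities with distinct Rankine-Hugoniot conditions, background jumps at the background shock speeds $\sigma_\pm$, and contributions from spatial infinity; one must track every sign and ensure the carefully chosen correction integrals $(\rho_m-\rho_r)[\Pi_+-\sigma_+t]$ and $(\rho_m-\rho_l)[\sigma_-t-\Pi_-]$ absorb both the background-jump part of the Leibniz boundary terms at $\Pi_\pm$ in the derivation of $X'$, and the $\partial_y\Pi_\pm(\rho_m-\rho_{l,r})$ residues in the derivation of $Y'$. No estimate is needed; everything is an exact identity once the decay of the perturbation in $y$ is used to discard boundary terms at $|y|=\infty$ (in particular when integrating by parts with the exponentially growing weight $w_-$ against $\Pi_\pm-\sigma_\pm t$, which decays at infinity).
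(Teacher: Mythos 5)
Your proposal is correct and follows essentially the same route as the paper: Leibniz differentiation of the four strip integrals, substitution of the rewritten equations \eqref{firsteq} and \eqref{thirdeq}, cancellation of the shock and contact-discontinuity traces via \eqref{RH1} and \eqref{RH3} together with the background relations \eqref{2.8}--\eqref{2.9}, and integration by parts in $y$ exploiting that the weights $e^y\pm e^{-y}$ are each other's derivatives, with the correction integrals absorbing the $\partial_t\Pi_\pm$ and $\partial_y\Pi_\pm$ residues exactly as you describe. No gaps.
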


\begin{proof}
We rewrite the first equation in \eqref{problem} as
\begin{equation}\label{firsteq}
\left \{\begin{aligned}
&(\rho-\rho_r)_t+(\rho u-\rho_ru_r)_x+(\rho v)_y=0,~~~\mbox{when }\Pi_+(t,y)<x<\infty,\\
&(\rho-\rho_m)_t+(\rho u)_x+(\rho v)_y=0,
     ~~~\mbox{when }  \Pi_-(t,y)<x<\Pi_0(t,y),\\
     &\mbox{ or }  \Pi_0(t,y)<x<\Pi_+(t,y),\\
&(\rho-\rho_l)_t+(\rho u-\rho_lu_l)_x+(\rho v)_y=0,~~~\mbox{when }-\infty<x<\Pi_-(t,y),
\end{aligned}\right.
\end{equation}
where $x=\Pi_+(t, y)$ and $x=\Pi_-(t, y)$ are the right and left shocks respectively, and $x=\Pi_0(t, y)$ is the contact discontinuity.

By the first equation in \eqref{firsteq} and by the integration by parts, we have
\begin{equation}
\label{right}
\begin{aligned}
&\frac{d}{dt}\int_{-\infty}^{+\infty}\int_{\Pi_+(t, y)}^{+\infty}(\rho-\rho_{r})(e^y+e^{-y})dxdy\\
=&-\int_{-\infty}^{+\infty}\partial_t\Pi_+(t, y)(\rho-\rho_{r})(e^y+e^{-y})\Big|_{x=\Pi_+^+(t, y)}dy\\
&+\int_{-\infty}^{+\infty}\int_{\Pi_+(t, y)}^{+\infty}(\rho-\rho_{r})_t(e^y+e^{-y})dxdy\\
=&-\int_{-\infty}^{+\infty}\partial_t\Pi_+(t, y)(\rho-\rho_{r})(e^y+e^{-y})\Big|_{x=\Pi_+^+(t, y)}dy\\
&-\int_{-\infty}^{+\infty}\int_{\Pi_+(t, y)}^{+\infty}\left[(\rho u-\rho_{r}u_r)_x(e^y+e^{-y})+(\rho v)_y(e^y+e^{-y})\right]dxdy\\
=&\int_{-\infty}^{+\infty}(e^y+e^{-y})\Big[-\partial_t\Pi_+(\rho-\rho_r)+(\rho u-\rho_ru_r)
-\partial_y\Pi_+(\rho v)\Big]\Big|_{x=\Pi_+^+(t, y)}dy\\
&+\int_{-\infty}^{+\infty}\int_{\Pi_+(t, y)}^{+\infty}(\rho v)(e^y-e^{-y})dxdy,
\end{aligned}
\end{equation}
where $\Pi_+^+$ denotes that the value taken at $x=\Pi_+(t,y)$ is the limit from the right hand. Next, in the region $\Pi_0(t,y)<x<\Pi_+(t,y)$, by the second equation in \eqref{firsteq}, we have
\begin{equation}
\label{midright}
\begin{aligned}
&\frac{d}{dt}\int_{-\infty}^{+\infty}\int_{\Pi_0(t, y)}^{\Pi_+(t, y)}(\rho-\rho_{m})(e^y+e^{-y})dxdy\\
=&\int_{-\infty}^{+\infty}\partial_t\Pi_+(t, y)(\rho-\rho_{m})(e^y+e^{-y})\Big|_{x=\Pi_+^-(t, y)}dy\\
&-\int_{-\infty}^{+\infty}\partial_t\Pi_0(t, y)(\rho-\rho_{m})(e^y+e^{-y})\Big|_{x=\Pi_0^+(t, y)}dy\\
&-\int_{-\infty}^{+\infty}\int_{\Pi_0(t, y)}^{\Pi_+}\left[(\rho u)_x(e^y+e^{-y})+(\rho v)_y(e^y+e^{-y})\right]dxdy\\
=&\int_{-\infty}^{+\infty}(e^y+e^{-y})\left[\partial_t\Pi_+(\rho-\rho_m)-\rho u+\partial_y\Pi_+(\rho v)\right]\Big|_{x=\Pi_+^-}dy\\
&-\int_{-\infty}^{+\infty}(e^y+e^{-y})\left[\partial_t\Pi_0(\rho-\rho_m)-\rho u+\partial_y\Pi_0(\rho v)\right]\Big|_{x=\Pi_0^+}dy\\
&+\int_{-\infty}^{+\infty}\int_{\Pi_0(t, y)}^{\Pi_+(t, y)}(\rho v)(e^y-e^{-y})dxdy,
\end{aligned}
\end{equation}
where $\Pi_+^-$ and $\Pi_0^+$, similarly as above, denote that the values taken are the limit from the left hand side of $\Pi_+$ and from the right hand side of $\Pi_0$, respectively.

For the integration in the regions $\Pi_-(t,y)<x<\Pi_0(t,y)$ and $-\infty<x<\Pi_-(t,y)$, similarly, we have the following results
\begin{equation}
\label{midleft}
\begin{aligned}
&\frac{d}{dt}\int_{-\infty}^{+\infty}\int_{\Pi_-(t, y)}^{\Pi_0(t, y)}(\rho-\rho_{m})(e^y+e^{-y})dxdy\\
=&\int_{-\infty}^{+\infty}(e^y+e^{-y})\left[\partial_t\Pi_0(\rho-\rho_m)-\rho u+\partial_y\Pi_0(\rho v)\right]\Big|_{x=\Pi_0^-}dy\\
&-\int_{-\infty}^{+\infty}(e^y+e^{-y})\left[\partial_t\Pi_-(\rho-\rho_m)-\rho u+\partial_y\Pi_-(\rho v)\right]\Big|_{x=\Pi_-^+}dy\\
&+\int_{-\infty}^{+\infty}\int_{\Pi_-(t, y)}^{\Pi_0(t, y)}(\rho v)(e^y-e^{-y})dxdy,
\end{aligned}
\end{equation}
and
\begin{equation}
\label{left}
\begin{aligned}
&\frac{d}{dt}\int_{-\infty}^{+\infty}\int_{-\infty}^{\Pi_-(t, y)}(\rho-\rho_{l})(e^y+e^{-y})dxdy\\
=&\int_{-\infty}^{+\infty}(e^y+e^{-y})\left[\partial_t\Pi_-(\rho-\rho_l)-(\rho u-\rho_ru_r)+\partial_y\Pi_-(\rho v)\right]\Big|_{x=\Pi_-^-}dy\\
&+\int_{-\infty}^{+\infty}\int_{-\infty}^{\Pi_-(t, y)}(\rho v)(e^y-e^{-y})dxdy.
\end{aligned}
\end{equation}
We omit the details for the shortness, since the arguments for the two results above are similar to the ones for \eqref{right} and \eqref{midright}.
It follows by adding \eqref{right}-\eqref{left} together that
\begin{equation}
\label{fourparts}
\begin{aligned}
&\frac{d}{dt}\int_{-\infty}^{+\infty}\Bigg[\int_{\Pi_+(t, y)}^{+\infty}(\rho-\rho_{r})dx+\int_{\Pi_0(t, y)}^{\Pi_+(t, y)}(\rho-\rho_{m})dx\\
&+\int_{\Pi_-(t, y)}^{\Pi_0(t, y)}(\rho-\rho_{m})dx+\int_{-\infty}^{\Pi_-(t, y)}(\rho-\rho_{l})dx\Bigg](e^y+e^{-y})dy\\
=&\int_{\R^2}(\rho v)(e^y-e^{-y})dxdy\\
&-\int_{-\infty}^{+\infty}(e^y+e^{-y})\left\{\partial_t\Pi_+[\rho]\Big|_{\Pi_+^-}^{\Pi_+^+}-[\rho u]\Big|_{\Pi_+^-}^{\Pi_+^+}+\partial_y\Pi_+[\rho v]\Big|_{\Pi_+^-}^{\Pi_+^+}\right\}dy\\
&-\int_{-\infty}^{+\infty}(e^y+e^{-y})\left\{\partial_t\Pi_0[\rho]\Big|_{\Pi_0^-}^{\Pi_0^+}-[\rho u]\Big|_{\Pi_0^-}^{\Pi_0^+}+\partial_y\Pi_0[\rho v]\Big|_{\Pi_0^-}^{\Pi_0^+}\right\}dy\\
&-\int_{-\infty}^{+\infty}(e^y+e^{-y})\left\{\partial_t\Pi_-[\rho]\Big|_{\Pi_-^-}^{\Pi_-^+}-[\rho u]\Big|_{\Pi_-^-}^{\Pi_-^+}+\partial_y\Pi_-[\rho v]\Big|_{\Pi_-^-}^{\Pi_-^+}\right\}dy\\
&+\int_{-\infty}^{+\infty}(e^y+e^{-y})\left[\partial_t\Pi_+(\rho_r-\rho_m)-\rho_ru_r\right]dy\\
&+\int_{-\infty}^{+\infty}(e^y+e^{-y})\left[\partial_t\Pi_-(\rho_m-\rho_l)+\rho_lu_l\right]dy,\\
\end{aligned}
\end{equation}
where $[f]\Big|_{\Pi_A^-}^{\Pi_A^+}$ denotes the jump difference between the left hand side limit and the right hand side limit of the function $f$ on the curve $x=\Pi_A(t, y)$
for $A\in \{+, 0, -\}$. By Rankine-Hugoniot conditions \eqref{RH1} and \eqref{2.8}-\eqref{2.9},
we obtain from \eqref{fourparts} that
\begin{equation}
\label{fourparts1}
\begin{aligned}
&\frac{d}{dt}\int_{-\infty}^{+\infty}\Bigg[\int_{\Pi_+(t, y)}^{+\infty}(\rho-\rho_{r})dx+\int_{\Pi_0(t, y)}^{\Pi_+(t, y)}(\rho-\rho_{m})dx\\
&+\int_{\Pi_-(t, y)}^{\Pi_0(t, y)}(\rho-\rho_{m})dx+\int_{-\infty}^{\Pi_-(t, y)}(\rho-\rho_{l})dx\Bigg](e^y+e^{-y})dy\\
=&\int_{\R^2}(\rho v)(e^y-e^{-y})dxdy\\
&-(\rho_m-\rho_{r})\int_{-\infty}^{+\infty}(e^y+e^{-y})\left(\partial_t\Pi_+-\sigma_+\right)dy\\
&-(\rho_m-\rho_{l})\int_{-\infty}^{+\infty}(e^y+e^{-y})\left(\sigma_--\partial_t\Pi_-\right)dy.\\
\end{aligned}
\end{equation}

It is \eqref{X}, based on the observation that
\[
\int_{-\infty}^{+\infty}(e^y+e^{-y})\left(\partial_t\Pi_+-\sigma_+\right)dy=\frac{d}{dt}(\int_{-\infty}^{+\infty}(e^y+e^{-y})\left(\Pi_+-\sigma_+t\right)dy)
\]
and
\[
\int_{-\infty}^{+\infty}(e^y+e^{-y})\left(\sigma_--\partial_t\Pi_-\right)dy=\frac{d}{dt}(\int_{-\infty}^{+\infty}(e^y+e^{-y})\left(\sigma_-t-\Pi_-\right)dy).
\]

\medskip
Now we are going to show \eqref{Y}. As in \eqref{firsteq}, we rewrite the third equation in \eqref{problem} as follows
\begin{equation}\label{thirdeq}
\left \{\begin{aligned}
&(\rho v)_t+(\rho uv)_x+(\rho v^2)_y+(\rho-\rho_r)_y=0,~~\mbox{when }\Pi_+(t,y)<x<\infty,\\
&(\rho v)_t+(\rho uv)_x+(\rho v^2)_y+(\rho-\rho_m)_y=0,
~~\mbox{when }   \Pi_-(t,y)<x<\Pi_0(t,y),\\
&\mbox{ or }  \Pi_0(t,y)<x<\Pi_+(t,y),\\
&(\rho v)_t+(\rho uv)_x+(\rho v^2)_y+(\rho-\rho_l)_y=0,~~\mbox{when }-\infty<x<\Pi_-(t,y).
\end{aligned}\right.
\end{equation}
As above, we divide the following quantity into four parts such that
\[
\begin{aligned}
&\frac{d}{dt}\int_{-\infty}^{+\infty}\int_{-\infty}^{+\infty}(e^y-e^{-y})\rho vdxdy\\
=&\frac{d}{dt}\int_{-\infty}^{+\infty}(\int_{\Pi_+}^{+\infty}+\int_{\Pi_0}^{\Pi_+}+\int_{\Pi_-}^{\Pi_0}+\int_{-\infty}^{\Pi_-})(e^y-e^{-y})\rho vdxdy.
\end{aligned}
\]
The first integration is on the region $\Pi_+(t,y)<x<\infty$. By the integration by parts, we have
\begin{equation}
\label{right1}
\begin{aligned}
&\frac{d}{dt}\int_{-\infty}^{+\infty}\int_{\Pi_+(t, y)}^{+\infty}(\rho v)(e^y-e^{-y})dxdy\\
=&-\int_{-\infty}^{+\infty}\partial_t\Pi_+(t, y)(\rho v)(e^y-e^{-y})\Big|_{x=\Pi_+^+(t, y)}dy
+\int_{-\infty}^{+\infty}\int_{\Pi_+(t, y)}^{+\infty}(\rho v)_t(e^y-e^{-y})dxdy\\
=&-\int_{-\infty}^{+\infty}\partial_t\Pi_+(t, y)(\rho v)(e^y-e^{-y})\Big|_{x=\Pi_+^+(t, y)}dy\\
&-\int_{-\infty}^{+\infty}\int_{\Pi_+(t, y)}^{+\infty}\Big[\left(\rho uv(e^y-e^{-y})\right)_x+\left(\rho v^2(e^y-e^{-y})\right)_y
+\left((\rho-\rho_r)(e^y-e^{-y})\right)_y\Big]dxdy\\
&+\int_{-\infty}^{+\infty}\int_{\Pi_+(t, y)}^{+\infty}(\rho v^2+\rho-\rho_r)(e^y+e^{-y})dxdy,\\
=&\int_{-\infty}^{+\infty}(e^y-e^{-y})\Big[-\partial_t\Pi_+(\rho v)+(\rho uv)-\partial_y\Pi_+(\rho v^2+\rho-\rho_r)\Big]\Big|_{x=\Pi_+^+}dy\\
&+\int_{-\infty}^{+\infty}\int_{\Pi_+(t, y)}^{+\infty}(\rho v^2+\rho-\rho_r)(e^y+e^{-y})dxdy.\\
\end{aligned}
\end{equation}

Similarly, for the other three integrations, after straightforward computation, we have
\begin{equation}
\label{midright1}
\begin{aligned}
&\frac{d}{dt}\int_{-\infty}^{+\infty}\int_{\Pi_0(t, y)}^{\Pi_+(t, y)}(\rho v)(e^y-e^{-y})dxdy\\
=&\int_{-\infty}^{+\infty}(e^y-e^{-y})\Big[\partial_t\Pi_+(\rho v)-(\rho uv)+\partial_y\Pi_+(\rho v^2+\rho-\rho_m)\Big]\Big|_{x=\Pi_+^-}dy\\
&-\int_{-\infty}^{+\infty}(e^y-e^{-y})\Big[\partial_t\Pi_0(\rho v)-(\rho uv)+\partial_y\Pi_0(\rho v^2+\rho-\rho_m)\Big]\Big|_{x=\Pi_0^+}dy\\
&+\int_{-\infty}^{+\infty}\int_{\Pi_0(t, y)}^{\Pi_+(t, y)}(\rho v^2+\rho-\rho_m)(e^y+e^{-y})dxdy,\\
\end{aligned}
\end{equation}
\begin{equation}
\label{midleft1}
\begin{aligned}
&\frac{d}{dt}\int_{-\infty}^{+\infty}\int_{\Pi_-(t, y)}^{\Pi_0(t, y)}(\rho v)(e^y-e^{-y})dxdy\\
=&\int_{-\infty}^{+\infty}(e^y-e^{-y})\Big[\partial_t\Pi_0(\rho v)-(\rho uv)+\partial_y\Pi_0(\rho v^2+\rho-\rho_m)\Big]\Big|_{x=\Pi_0^-}dy\\
&-\int_{-\infty}^{+\infty}(e^y-e^{-y})\Big[\partial_t\Pi_-(\rho v)-(\rho uv)+\partial_y\Pi_-(\rho v^2+\rho-\rho_m)\Big]\Big|_{x=\Pi_-^+}dy\\
&+\int_{-\infty}^{+\infty}\int_{\Pi_-(t, y)}^{\Pi_0(t, y)}(\rho v^2+\rho-\rho_m)(e^y+e^{-y})dxdy,\\
\end{aligned}
\end{equation}
and
\begin{equation}
\label{left1}
\begin{aligned}
&\frac{d}{dt}\int_{-\infty}^{+\infty}\int_{-\infty}^{\Pi_-(t, y)}(\rho v)(e^y-e^{-y})dxdy\\
=&\int_{-\infty}^{+\infty}(e^y-e^{-y})\Big[\partial_t\Pi_-(\rho v)-(\rho uv)+\partial_y\Pi_-(\rho v^2+\rho-\rho_l)\Big]\Big|_{x=\Pi_-^-}dy\\
&+\int_{-\infty}^{+\infty}\int_{-\infty}^{\Pi_-(t, y)}(\rho v^2+\rho-\rho_l)(e^y+e^{-y})dxdy.\\
\end{aligned}
\end{equation}

Therefore, it follows by adding \eqref{right1}-\eqref{left1} together that
\begin{equation}
\label{fourparts2}
\begin{aligned}
&\frac{d}{dt}\int_{\R^2}(e^y-e^{-y})\rho vdxdy\\
=&\int_{-\infty}^{+\infty}\Bigg[\int_{\Pi_+(t, y)}^{+\infty}(\rho-\rho_{r})dx+\int_{\Pi_0(t, y)}^{\Pi_+(t, y)}(\rho-\rho_{m})dx\\
&+\int_{\Pi_-(t, y)}^{\Pi_0(t, y)}(\rho-\rho_{m})dx+\int_{-\infty}^{\Pi_-(t, y)}(\rho-\rho_{l})dx\Bigg](e^y+e^{-y})dy\\
&+\int_{\R^2}(e^y+e^{-y})\rho v^2dxdy\\
&-\int_{-\infty}^{+\infty}(e^y-e^{-y})\left\{\partial_t\Pi_+[\rho v]\Big|_{\Pi_+^-}^{\Pi_+^+}-[\rho uv]\Big|_{\Pi_+^-}^{\Pi_+^+}+\partial_y\Pi_+[\rho v^2+\rho]\Big|_{\Pi_+^-}^{\Pi_+^+}\right\}dy\\
&-\int_{-\infty}^{+\infty}(e^y-e^{-y})\left\{\partial_t\Pi_0[\rho v]\Big|_{\Pi_0^-}^{\Pi_0^+}-[\rho uv]\Big|_{\Pi_0^-}^{\Pi_0^+}+\partial_y\Pi_0[\rho v^2+\rho]\Big|_{\Pi_0^-}^{\Pi_0^+}\right\}dy\\
&-\int_{-\infty}^{+\infty}(e^y-e^{-y})\left\{\partial_t\Pi_-[\rho v]\Big|_{\Pi_-^-}^{\Pi_-^+}-[\rho uv]\Big|_{\Pi_-^-}^{\Pi_-^+}+\partial_y\Pi_-[\rho v^2+\rho]\Big|_{\Pi_-^-}^{\Pi_-^+}\right\}dy\\
&-(\rho_m-\rho_r)\int_{-\infty}^{+\infty}\partial_y\Pi_+(t, y)(e^y-e^{-y})dy\\
&+(\rho_m-\rho_l)\int_{-\infty}^{+\infty}\partial_y\Pi_-(t, y)(e^y-e^{-y})dy.\\
\end{aligned}
\end{equation}

For the last two integrals in \eqref{fourparts2}, by the integration by part, we have
\begin{equation}
\label{ltt1}
\begin{aligned}
&(\rho_m-\rho_r)\int_{-\infty}^{+\infty}\partial_y\Pi_+(t, y)(e^y-e^{-y})dy\\
=&(\rho_m-\rho_r)\int_{-\infty}^{+\infty}\partial_y\left[\Pi_+(t, y)-\sigma_+t\right](e^y-e^{-y})dy\\
=&-(\rho_m-\rho_r)\int_{-\infty}^{+\infty}\left[\Pi_+(t, y)-\sigma_+t\right](e^y+e^{-y})dy,\\
&(\rho_m-\rho_l)\int_{-\infty}^{+\infty}\partial_y\Pi_+(t, y)(e^y-e^{-y})dy\\
=&(\rho_m-\rho_l)\int_{-\infty}^{+\infty}\partial_y\left[\Pi_-(t, y)-\sigma_-t\right](e^y-e^{-y})dy\\
=&-(\rho_m-\rho_l)\int_{-\infty}^{+\infty}\left[\Pi_-(t, y)-\sigma_-t\right](e^y+e^{-y})dy.\\
\end{aligned}
\end{equation}

So by the Rankine-Hugoniot conditions \eqref{RH3} on $\Pi_+$, $\Pi_0$, and $\Pi_-$,  
if follows from \eqref{fourparts2}-\eqref{ltt1} that
\begin{equation}
\label{fourparts3}
\begin{aligned}
&\frac{d}{dt}\int_{\R^2}(e^y-e^{-y})\rho vdxdy\\
=&\int_{\R^2}(e^y+e^{-y})\rho v^2dxdy\\
&+\int_{-\infty}^{+\infty}(e^y+e^{-y})\Bigg[\int_{\Pi_+(t, y)}^{+\infty}(\rho-\rho_{r})dx+\int_{\Pi_0(t, y)}^{\Pi_+(t, y)}(\rho-\rho_{m})dx\\
&+\int_{\Pi_-(t, y)}^{\Pi_0(t, y)}(\rho-\rho_{m})dx+\int_{-\infty}^{\Pi_-(t, y)}(\rho-\rho_{l})dx\Bigg]dy\\
&+(\rho_m-\rho_r)\int_{-\infty}^{+\infty}(e^y+e^{-y})\left[\Pi_+(t, y)-\sigma_+t\right]dy\\
&+(\rho_m-\rho_l)\int_{-\infty}^{+\infty}(e^y+e^{-y})\left[\sigma_-t-\Pi_-(t, y)\right]dy.\\
\end{aligned}
\end{equation}

Therefore, from \eqref{fourparts1} and \eqref{fourparts3} we obtain \eqref{X} and \eqref{Y}.
\end{proof}

Based on Lemma \ref{lem:3.1}, now we can show Theorem \ref{result}.
\begin{proof}[Proof of Theorem \ref{result}]
Based on the entropy condition \eqref{entropy} and the Rankine-Hugoniot conditions \eqref{2.9}, we know that
\begin{equation}
1+u_r<\sigma_+<1+u_m=1\qquad\mbox{and}\qquad-1=-1+u_m<\sigma_{-}<-1+u_l,
\end{equation}
where we have used the assumption  that $u_m=0$.
So if $\varepsilon$ is small, the propagation speed of waves is smaller than $1$.
Due to the assumption of the support of the initial data in \eqref{2.4x} and \eqref{2.5x}, there exists a constant $t_0$ such that
%
the support of the solution $v(t, x, y)$ satisfies
\begin{equation}
\label{suppv1}
\begin{aligned}
supp~v(t, x, y)&\subset \left\{(x, y)\big|x^2+y^2\le (t-t_0+C_0t_0+1)^2\right\}\\
&\triangleq \left\{(x, y)\big|x^2+y^2\le (t+C_1)^2\right\}~~~t\ge t_0,
\end{aligned}
\end{equation}
where constant
\[
C_1=(C_0-1)t_0+1,
\]
is independent of $\e$.

By H\"{o}lder's inequality and \eqref{suppv1}, for $t\geq t_0$, we have
\begin{equation}
\label{Y1}
\begin{aligned}
Y^2(t)\le \int_{x^2+y^2\le (t+C_1)^2}(e^y+e^{-y})\rho dxdy\int_{\R^2}(e^y+e^{-y})\rho v^2dxdy.
\end{aligned}
\end{equation}

Note that
\begin{equation}
\label{Y2}
\begin{aligned}
 &\int_{x^2+y^2\le (t+C_1)^2}(e^y+e^{-y})\rho dxdy\\
 \le &\rho^\ast\int_{|y|\le t+C_1}(e^y+e^{-y})\int_{|x|\le \sqrt{(t+C_1)^2-|y|^2}}dxdy\\
 \le &C(t+C_1)^{\frac12}e^{t}\int_{|y|\le t+C_1}e^{-t+C_1}(e^y+e^{-y})\sqrt{t+C_1-|y|}dy\\
 \le& C(t+1)^{\frac12}e^t,\\
\end{aligned}
\end{equation}
where the last inequality can be obtained by using the variable transformation $\tau=t-|y|$. Here and afterwards, $C$ denotes a generic positive constant which is independent of $\e$. Therefore, it follows from \eqref{X}, \eqref{Y}, \eqref{Y1} and \eqref{Y2} that
\begin{equation}
\label{Y3}
\begin{aligned}
Y'(t)&\ge X(t)+CY^2(t)e^{-t}(t+1)^{-\frac12}\\
&\ge X(0)+\int_0^tY(\tau )d\tau+CY^2(t)e^{-t}(t+1)^{-\frac12},~~~t\ge t_0.\\
\end{aligned}
\end{equation}

Let
\begin{equation}\nonumber
e^tZ=\int_0^tY(\tau )d\tau.
\end{equation}
Similarly as done for \eqref{Y3}, it is easy to get from \eqref{X}, \eqref{Y} and \eqref{Y11} that
\begin{equation}\label{Z3}
Z''+2Z'\ge e^{-t}X(0)+e^{-t}Y^2(t)\left(\int_{|x|^2+|y|^2\leq (C_0t+1)^2}(e^y+e^{-y})dxdy\right)^{-1}.\\
\end{equation}

In fact, by the finite propagation speed of waves, we also know that the support of solution $v(t,x,y)$ satisfies that for all $t\geq0$
\begin{equation}
supp~v(t, x, y)\subset \left\{(x, y)\big|x^2+y^2\le (C_0t+1)^2\right\}.
\end{equation}
So by the H\"{o}lder inequality, we also have for all $t\geq0$
\begin{equation}
\label{Y11}
\begin{aligned}
Y^2(t)\le \int_{x^2+y^2\le (C_0t+1)^2}(e^y+e^{-y})\rho dxdy\int_{\R^2}(e^y+e^{-y})\rho v^2dxdy.
\end{aligned}
\end{equation}
Then \eqref{Y11} follows by exactly the same argument as the one for \eqref{Y3}.

By assumption \eqref{2.17}, we know that $X(0)\ge 0$. Then it follows from \eqref{Z3} that
\begin{equation}\label{Z1}
(e^{2t}Z')'\ge 0,~~~t\ge 0.
\end{equation}

By assumption \eqref{2.18}, we know that $Z'(0)=Y(0)\geq C\varepsilon$. It means that $Z'(0)=Y(0)\ge 0$, so \eqref{Z1} implies for $t\geq0$,
\begin{equation}\nonumber
e^{2t}Z'\ge 0.
\end{equation}

Therefore for $t\geq0$, we have
\begin{equation}\label{Z2}
Z'\ge 0,\qquad\mbox{and}\qquad Z+Z'\ge Z+\frac{Z'}{2}.
\end{equation}

Also, \eqref{Y3} yields that
\begin{equation}\label{Z}
Z''+2Z'\ge e^{-t}X(0)+C(Z+Z')^2(t+1)^{-\frac12},~~~t\ge t_0.\\
\end{equation}
Let $W=Z'+2Z$, we finally get from \eqref{Z} and \eqref{Z2} that for $t\ge t_0$
\begin{equation}\label{W}
\begin{aligned}
W'&\ge CW^2(1+t)^{-\frac12}+e^{-t}X(0)\\
&\ge CW^2(1+t)^{-\frac12}.\\
\end{aligned}
\end{equation}
Noting that for $t_0\ge 1$, we have
\[
\begin{aligned}
W(t_0)&=Z'(t_0)+2Z(t_0)\\
&\ge Z(1)\\
&= e^{-1}\int_0^1Y(t)dt\\
&\ge CY(0).
\end{aligned}
\]

Because from \eqref{2.18}, we know that $Y(0)\ge C\e$.
So by \eqref{W}, we know that
\[
\frac{W'}{W^2}\geq\frac{C}{(1+t)^{\frac{1}{2}}}.
\]
Then
\[
W(t)\geq\frac{1}{\frac{1}{W(t_0)}-2C(1+t)^{\frac{1}{2}}}.
\]

Therefore, $W(t)$ will blow up before a time $C\varepsilon^{-2}$. It is the lifespan estimate \eqref{lifespan}.
\end{proof}


\section{Proof of Theorem \ref{thm3D}: Three dimensional case}\label{sec:5}
In this section, we will prove Theorem \ref{thm3D}. In order to make it, instead of the test function $e^{y}\pm e^{-y}$ used in two dimensions, we introduce the following test function
\[
F(y)=\int_{\omega_1^2+\omega_2^2=1}e^{y_1\omega_1+y_2\omega_2}d\sigma.
\]
Test function $F(y)$ is radially symmetric and satisfies the following properties
\begin{equation}\label{test}
\left \{
\begin{aligned}
&\Delta_y F(y)=F(y),\\
&0\le F(y)\le Cr^{-\frac12}e^r,~~\mbox{where }r=|y|=\sqrt{y_1^2+y_2^2}.\\
\end{aligned}\right.
\end{equation}

One can refer the reference \cite{LZ} for more details of the properties of the test function $F(y)$. Based on the test function $F(y)$, now we can prove Theorem \ref{thm3D}.

\begin{proof}[Proof of Theorem \ref{thm3D}]
Let
\[
\begin{aligned}
X(t)=&
\int_{\R^2}F(y)
\Bigg[\int_{\Pi_+(t, y)}^{+\infty}(\rho-\rho_{r})dx+\int_{\Pi_0(t, y)}^{\Pi_+(t, y)}(\rho-\rho_{m})dx\\
&\qquad\qquad+\int_{\Pi_-(t, y)}^{\Pi_0(t, y)}(\rho-\rho_{m})dx+\int_{-\infty}^{\Pi_-(t, y)}(\rho-\rho_{l})dx\Bigg]
dy\\
&+(\rho_m-\rho_r)
\int_{\R^2}
F(y)\left[\Pi_+(t, y)-\sigma_+t\right]
dy\\
&+(\rho_m-\rho_l)
\int_{\R^2}F(y)
\left[\sigma_-t-\Pi_-(t, y)\right]
dy,
\end{aligned}
\]
and
\[
\begin{aligned}
Y(t)&=\int_{|\omega|=1}\int_{\R^2}\int_{-\infty}^{+\infty}e^{y_1\omega_1+y_2\omega_2}\rho v_\omega dxdyd\sigma,\\
\end{aligned}
\]
where
\[
v_\omega(t, x, y_1, y_2)=\omega_1v_1(t, x, y_1, y_2)+\omega_2v_2(t, x, y_1, y_2).\\
\]
Here $(u,v_1,v_2)$ are the velocity.

Multiplying the third and forth equations in \eqref{3dproblem} with $\omega_1$ and $\omega_2$ respectively, and then adding together,
we come to a new system
\begin{equation}\label{3dproblemnew}
\left \{
\begin{aligned}
&(\rho)_t+(\rho u)_x+(\rho v_1)_{y_1}+(\rho v_2)_{y_2}=0,\\
&(\rho u)_t+\left(\rho u^2\right)_x+(\rho uv_1)_{y_1}+(\rho uv_2)_{y_2}+\rho_x=0,\\
&(\rho v_\omega)_t+\left(\rho uv_\omega\right)_x+\left(\rho v_1v_\omega\right)_{y_1}+(\rho v_2v_\omega)_{y_2}+\omega_1\rho_{y_1}
+\omega_2\rho_{y_2}=0.\\
\end{aligned}\right.
\end{equation}

Similar to the two dimensional case, by a straightforward computation samely as the one for the proof of Lemma \ref{lem:3.1} (we omit the long but tedius details for the shortness),  we can establish the following ordinary differential system
\begin{equation}\label{3dode}
\begin{aligned}
&X'(t)=Y(t),\\
&Y'(t)=X(t)+\int_{|\omega|=1}\int_{\R^2}\int_{-\infty}^{+\infty}e^{y_1\omega_1+y_2\omega_2}\rho v_\omega^2 dxdyd\sigma.\\
\end{aligned}
\end{equation}

Let
\[
e^{t}Z(t)=\int_0^tY(\tau)d\tau.\\
\]
Then by the H\"{o}lder inequality and \eqref{3dode}, we get
\begin{equation}\label{3dZ}
\begin{aligned}
&Z''+2Z'\\
=&e^{-t}X(0)+e^{-t}\int_{|\omega|=1}\int_{\R^2}\int_{-\infty}^{+\infty}e^{y_1\omega_1+y_2\omega_2}\rho v_\omega^2 dxdyd\sigma\\
\ge &e^{-t}X(0)+e^{-t}Y^2(t)\left(\int_{|\omega|=1}\int_{x^2+r^2\le (t+R)^2}e^{y_1\omega_1+y_2\omega_2}\rho dxdyd\sigma\right)^{-1}.\\
\end{aligned}
\end{equation}
For the last inequality above we use the properties that $v_1$ and $v_2$ are compactly supported and the finite propogation speed. More preciesly, by the entropy condition, the shock speeds $\sigma_{+}<1$ and $\sigma_->-1$. When $\Pi_-(t,y)\leq x\leq \Pi_+(t,y)$, the speed of the characteristic at the boundary of the compact support of the solution is $1$ since $(u,v_1,v_2)=(0,0,0)$. Hence if $\varepsilon$ is sufficiently small, we know that there exists $R>0$ large enough which does not depend on the data such that the support of $v_1$ and $v_2$ satisfies that $x^2+r^2\le (t+R)^2$, where $r^2=y^2_1+y_2^2$.

By \eqref{2.21}, we know that $X(0)\geq 0$. So \eqref{3dZ} implies
\[
\left(e^{2t}Z'\right)'\ge 0,~~t\ge 0.
\]

By \eqref{2.22}, we further know that that $Z'(0)=Y(0)\geq 0$. So it follows from the inequality above that
\[
e^{2t}Z'\ge 0,~~t\ge 0.
\]

Therefore
\begin{equation}\label{3dZ2}
\begin{aligned}
&Z'\ge 0,~~~t\ge 0\\
&Z+Z'\ge Z+\frac{Z'}{2},~~~t\ge 0.
\end{aligned}
\end{equation}

Next, we need to estimate of the last term in \eqref{3dZ} when $t\ge \widetilde{t}_0$ for some
fixed time $\widetilde{t}_0$ which is independent of $\e$. But unlike the two diemsnional case, it has to be done in a different way. 
By \eqref{test}, we have
\begin{equation}\label{lastterm}
\begin{aligned}
&\int_{|\omega|=1}\int_{x^2+r^2\le (t+R)^2}e^{y_1\omega_1+y_2\omega_2}\rho dxdyd\sigma\\
=&\int_{x^2+r^2\le (t+R)^2}F\rho dxdy\\
\le&C(t+1)^{\frac12}\int_{r\le \sqrt{(t+R)^2-|x|^2}}\rho Fr\sqrt{t+R-r}dr\\
\le&C(t+1)^{\frac12}e^t\int_{r\le \sqrt{(t+R)^2-|x|^2}}e^{-t-R+r}r^{\frac12}\sqrt{t+R-r}dr\\
\le&C(t+1)e^t\int_{r\le \sqrt{(t+R)^2-|x|^2}}e^{-t-R+r}\sqrt{t+R-r}dr\\
\le&C(t+1)e^t,~~~t\ge \widetilde{t}_0.\\
\end{aligned}
\end{equation}

Finally, let $W=Z'+2Z$, then from \eqref{2.21}, \eqref{3dZ} and \eqref{3dZ2}, we have
\begin{equation}\label{3dW}
\begin{aligned}
W'&\ge CW^2(1+t)^{-1}+e^{-t}X(0)\\
&\ge CW^2(1+t)^{-1},~~~t\ge \widetilde{t}_0.\\
\end{aligned}
\end{equation}

Without loss of the generality, we can assume $\widetilde{t}_0\ge 1$. So by \eqref{3dZ2} we have
\[
\begin{aligned}
W(\widetilde{t}_0)&=Z'(\widetilde{t}_0)+2Z(\widetilde{t}_0)\\
&\ge Z(1)\\
&= e^{-1}\int_0^1Y(t)dt\\
&\ge CY(0).
\end{aligned}
\]

By \eqref{2.22}, we know that there exists a constant $C$ which does not depend on the data such that $Y(0)\ge C\e$. Then it follows \eqref{3dW} that
\[
W(t)\geq\frac{1}{\frac{1}{W(t_0)}-\ln\frac{1+t}{1+t_0}}\geq \frac{1}{\frac{1}{C\varepsilon}-\ln\frac{1+t}{1+t_0}}.
\]

Therefore, $W(t)$ becomes infinite before the time $\exp\left(C\e^{-1}\right)$.
\end{proof}

\section*{Acknowledgment}
\par\quad
Part of this work was finished when the first author visited the institute of mathematical sciences, he wants to express his sincere thank to Prof. Zhouping Xin for his kind invitation and warm hospitality.
N. A. Lai was partially supported by Zhejiang Province
Science Foundation(LY18A010008), NSFC(11501273, 11726612, 11771359,
11771194), Chinese Postdoctoral Science Foundation(2017M620128, 2018T110332), the Scientific Research Foundation of the First-Class Discipline of Zhejiang Province
(B)(201601). W. Xiang was supported in part by the Research
Grants Council of the HKSAR, China (Project No. CityU 21305215, Project No. CityU 11332916, Project No. CityU 11304817 and Project No. CityU 11303518).
Y. Zhou was supported by Key Laboratory of Mathematics for Nonlinear Sciences (Fudan University), Ministry of Education of China, P.R.China.
Shanghai Key Laboratory for Contemporary Applied Mathematics, School of Mathematical Sciences, Fudan University, P.R. China, NSFC (grants No. 11421061, grants No.11726611, grants No. 11726612), 973 program (grant No. 2013CB834100) and 111 project.

\section*{References}
\bibliographystyle{plain}

\begin{thebibliography}{20}

\bibitem{Ali}
S. Alinhac, Existence d'ondes de rarefaction pour des syst\`{e}mes quasi-lin\'{e}aires hyperboliques multidimensionnels. \textit{Comm. Partial Differential Equations}, \textbf{14} (1989), 173--230.

\bibitem{BaeChenFeldman}
M. Bae, G.-Q. Chen and M. Feldman,
Regularity of solutions to regular shock reflection for potential flows. \textit{Invent. Math.}, \textbf{175} (2009), 505--543.

\bibitem{BCF2}
M. Bae, G.-Q. Chen, and M. Feldman,
Prandtl-Meyer reflection for supersonic flow past a solid ramp.
\textit{Quart. Appl. Math.}, \textbf{71} (2013), 583--600.

\bibitem{BiBr}
S. Bianchini and A. Bressan,
Vanishing viscosity solutions of nonlinear hyperbolic systems.
\textit{Ann. of Math.}, \textbf{161} (2005), 223--342.
	
\bibitem{Br}
A. Bressan, Hyperbolic Systems of Conservation Laws. Oxford: Oxford
University Press, 2000.

\bibitem{BrLiuYang}
A. Bressan, T.-P. Liu and T. Yang,
$L^1$ stability estimates for n$\times$n conservation laws.
\textit{Arch. Rational Mech. Anal.}, \textbf{149} (1999), 1--22.

\bibitem{CXY} G. Cao, W. Xiang and X. Yang, Global structure of admissible solutions of multi-dimensional non-homogeneous scalar conservation law with Riemann-type data. \textit{J. Differential Equations}, \textbf{263} (2017), 1055--1078.

\bibitem{ChenChZhang}
G.-Q. Chen, C. Christoforou and Y. Zhang, Continuous dependence of
entropy solutions to the Euler equations on the adiabatic exponent and Mach
number.
\textit{Arch. Ration. Mech. Anal.}, \textbf{189} (2008), 97--130.

\bibitem{cdx-1}
G.-Q.~Chen, X.~Deng and W.~Xiang.
\newblock The global existence and optimal regularity of solutions for shock
diffraction problem to the nonlinear wave systems,
\newblock {\em Arch. Ration. Mech. Anal.}, {\bf 211} (2014), 61--112.

\bibitem{ChenFeldman}
G.-Q. Chen and M. Feldman, Global solutions to shock reflection by large-angle wedges for potential flow. \textit{Ann. of Math.}, \textbf{171} (2010), 1067--1182.

\bibitem{ChenFeldman1}
G.-Q. Chen and M. Feldman, Mathematics of Shock Reflection-Diffraction and von Neumann Conjectures. Princeton University Press, Princeton, 2018.

\bibitem{CFHX}
G.-Q. Chen, M. Feldman, J. Hu and W. Xiang,
Loss of Regularity of Solutions of the Lighthill Problem for Shock Diffraction for Potential Flow,
\textit{arXiv:1705.06837}, 2017.

\bibitem{ChenFeldmanXiang}
G.-Q. Chen, M. Feldman and W. Xiang,
Convexity of Self-Similar Transonic Shocks and Free Boundaries for Potential Flow. \emph{ arXiv:1803.02431}, 2018.

\bibitem{ChenXiangZhang}
G.-Q. Chen, W. Xiang and Y. Zhang, Weakly Nonlinear Geometric Optics for Hyperbolic Systems of Conservation Laws. \textit{Comm. Partial Differential Equations}, \textbf{38} (2013), 1936--1970.

\bibitem{ChenZhangZhu}
G.-Q. Chen, Y. Zhang and D. Zhu, Existence and stability of supersonic Euler flows past Lipschitz wedges, \textit{Arch. Ration. Mech. Anal.}, \textbf{181} (2006), 261--310.

\bibitem{Chens}
S. Chen, Stability of a Mach configuration. \textit{Comm. Pure Appl. Math.}, \textbf{59} (2006), 1--35.

\bibitem{Chens1}
S. Chen, Mach configuration in pseudo-stationary compressible flow. \textit{J. AMS}, \textbf{21} (2008), 63--100.

\bibitem{chenli}
S. Chen and D. Li, Cauchy problem with general discontinuous initial data along a smooth curve for 2-d Euler system. \textit{J. Differential Equations}, \textbf{257} (2014), 1939--1988.

\bibitem{ChenXinYin}
S. Chen, Z. Xin and H. Yin, Global shock waves for the supersonic flow past a perturbed cone. \textit{Comm. Math. Phys.}, \textbf{228} (2002), 47--84.

\bibitem{ChenYuan}
S. Chen and H. Yuan, Transonic shocks in compressible flow passing a duct for three-dimensional Euler systems. \textit{Arch. Rational Mech. Anal.}, \textbf{187} (2008), 523--556.

\bibitem{ChMiao}
D. Christodoulou and S. Miao, Compressible Flow and Euler's Equations. International Press, Boston, 2014.

\bibitem{CoFr}
R. Courant and K. Friedrichs, Supersonic flow and shock waves. Wiley Interscience, New York, 1948.


\bibitem{CoSe1}
J. Coulombel and P. Secchi, Nonlinear compressible vortex sheets in two-dimensions. \textit{Ann. Sci. \'{E}c. Norm Sup\'{e}r}, \textbf{41} (2008), 85--139.

\bibitem{Da}
C. Dafermos, Hyperbolic conservation laws in continuum physics, Third edition, Springer-Verlag, Heidelberg 2010.

\bibitem{ElLiu}
V. Elling and T.-P. Liu, Supersonic flow onto a solid wedge. \textit{Comm. Pure Appl. Math.}, \textbf{61} (2008), 1347--1448.

\bibitem{FRX}
L. Fan, L. Ruan and W. Xiang, Asymptotic stability of a composite wave of two viscous shock waves for the one-dimensional radiative Euler equations.
\textit{Ann. I.H. Poincare-An.}, (2018), accepted.

\bibitem{FangLiuYuan}
B. Fang, L. Liu, and H. Yuan, Global uniqueness of steady transonic shocks in two-dimensional compressible Euler flows.
\textit{Arch. Rational Mech. Anal.}, \textbf{207} (2013), 317--345.

\bibitem{FX}
B. Fang and W. Xiang,
The uniqueness of transonic shocks in supersonic flow past a 2-D wedge.
\emph{J. Math. Anal. Appl.}, {\bf 437} (2016), 194--213.

\bibitem{Glimm}
J. Glimm, Solutions in the large for nonlinear hyperbolic systems of equations. \textit{Comm. Pure Appl. Math.}, \textbf{18} (1965), 697--715.

\bibitem{God}
P. Godin, Long time existence of a class of perturbation of planar shock fronts for second order hyperbolic conservation laws.
\emph{Duke Math. J.}, {\bf 60} (1990), 425--463.

\bibitem{HKWX}
F. Huang, J. Kuang, D. Wang, and W. Xiang,
Stability of supersonic contact discontinuity for two-dimensional steady compressible Euler flows in a finite nozzle.
\emph{arXiv:1804.04769}, 2018.

\bibitem{Lax1}
P. Lax, Hyperbolic systems of conservation laws. \emph{Comm. Pure Appl. Math.}, \textbf{10} (1957), 537--566.

\bibitem{lax}
P. Lax, Development of singularities of solutions of nonlinear hyperbolic partial differential equations. \emph{J. Mathematical Phys.}, \textbf{5} (1964), 611--614.

\bibitem{LiZheng}
J. Li and Y. Zheng, Interaction of rarefaction waves of the two-dimensional self-similar Euler equations. \emph{Arch. Ration. Mech. Anal.}, \textbf{193} (2009), 523--557.


\bibitem{LWY2}
J. Li, I. Witt, and H. Yin, Global multidimensional shock waves for 2-D and 3-D unsteady potential flow equations.
\emph{SIAM J. Math. Anal.}, \textbf{50} (2018), 933--1009.



\bibitem{LiZhao}
T.-T. Li and Y. Zhao, Global shock solutions to a class of piston problems for the system of one-dimensional isentropic flow.
\emph{Chinese Ann. Math. Ser. B}, \textbf{12} (1991), 495--499.

\bibitem{LiZhouKong}
T.-T. Li, Y. Zhou and D. Kong, Weak linear degeneracy and global classical solutions for general quasilinear hyperbolic systems. \emph{Comm. Partial Differential Equations}, \textbf{19} (1994), 1263--1317.

\bibitem{LiWang}
T.-T. Li and L. Wang, Global propagation of regular nonlinear hyperbolic waves. Progress in Nonlinear Differential Equations and their Applications, 76. Birkhäuser Boston, 2009.

\bibitem{LZ}
T.-T. Li and Y. Zhou, Nonlinear Wave Equations(in Chinese), Series in Contemporary Mathematics, Shanghai Scientific \& Technical Publishers, 2016.

\bibitem{Liu1}
T.-P. Liu, Large time behavior of solutions of initial and initial-boundary value problems of a general system of hyperbolic conservation laws. \emph{Comm. Math. Phys.}, \textbf{55} (1977), 163--177.

\bibitem{Liu}
T.-P. Liu, The development of singularities in the nonlinear waves for quasi-linear hyperbolic partial differential equations. \emph{J. Differential Equations}, \textbf{33} (1979), 92--111.

\bibitem{LiuYang}
T.-P. Liu and T. Yang, $L^1$ stability of weak solutions for 2$\times$2 systems of hyperbolic conservation laws. \emph{J. AMS}, \textbf{12} (1999), 729--774.

\bibitem{Majda}
A. Majda, The stability of multi-dimensional shock fronts. \emph{Memoirs AMS}, \textbf{41} (1983), No. 275.

\bibitem{Majda1}
A. Majda, The existence of multi-dimensional shock fronts. \emph{Memoirs AMS}, \textbf{43} (1983), No. 281.

\bibitem{Me}
G. Metivier, Stability of multi-dimensional weak shocks. \emph{Comm. Partial Differential Equations}, \textbf{15} (1990), 983--1028.

\bibitem{QX}
A. Qu and W. Xiang.
{Three-dimensional steady supersonic Euler flow past a concave cornered wedge with lower pressure at the downstream},
{\it Arch. Ration. Mech. Anal.} {\bf 228} (2018), 431--476.

\bibitem{Si}
T. Sideris, Formation of singularities in three-dimensional compressible fluids. \emph{Comm. Math. Phys.}, \textbf{101} (1985), 475--485.

\bibitem{WY}
Y. Wang and F. Yu, Structural stability of supersonic contact discontinuities in three-dimensional compressible steady flows. \emph{SIAM J. Math. Anal.}, \textbf{47} (2015), 1291--1329.

\bibitem{WYuan}
Y. Wang and H. Yuan, Weak stability of transonic contact discontinuities in three-dimensional steady non-isentropic compressible Euler flows. \emph{Z. Angew. Math. Phys.}, \textbf{66} (2015), 341--388.

\bibitem{XZZ}
W. Xiang, Y. Zhang and Q. Zhao, Two-dimensional steady supersonic exothermically reacting Euler flows with strong contact discontinuity over Lipschitz wall. \emph{Interface Free Bound.}, (2018), accepted.


\bibitem{Zhang}
Y. Zhang, Steady supersonic flow past an almost straight wedge with large vertex angle. \emph{J. Differential Equations}, \textbf{192} (2003), 1--46.


\end{thebibliography}

\end{document}